\newtheorem{theorem}{Theorem}[section]
\newtheorem{lemma}[theorem]{Lemma}
\theoremstyle{definition}
\newtheorem{definition}[theorem]{Definition}
\theoremstyle{remark}
\newtheorem{remark}[theorem]{Remark}
\numberwithin{equation}{section}
\DeclareMathOperator{\diam}{diam}
\DeclareMathOperator{\conv}{conv}
\newcommand{\inte}{\mathop{\rm int}}
\renewcommand{\epsilon}{\varepsilon}
\renewcommand{\phi}{\varphi}
\renewcommand{\kappa}{\varkappa}
\newcommand{\gdiv}{\,\frac{\, *\, }{}\,}
\title{On the Carath\'eodory number for strong convexity}
\author{Vuong Bui{$^\spadesuit$}}
\author{Roman~Karasev{$^\clubsuit$}}
\thanks{{$^\clubsuit$} Supported by the Federal professorship program grant 1.456.2016/1.4 and the Russian Foundation for Basic Research grants 18-01-00036 and 19-01-00169}
\address{{$^\spadesuit$}Moscow Institute of Physics and Technology, Institutskiy per. 9, Dolgoprudny, Russia 141700 and Institut f\"ur Informatik, Freie Universit\"{a}t Berlin, Takustra{\ss}e~9, 14195 Berlin, Germany}
\email{bui.vuong@yandex.ru}
\address{{$^\clubsuit$}Moscow Institute of Physics and Technology, Institutskiy per. 9, Dolgoprudny, Russia 141700 and Institute for Information Transmission Problems RAS, Bolshoy Karetny per. 19, Moscow, Russia 127994}
\email{r\_n\_karasev@mail.ru}
\urladdr{http://www.rkarasev.ru/en/}
\subjclass[2010]{52A20, 52A35}
\begin{document}

\begin{abstract}
We give an improvement of the Carath\'eodory theorem for strong convexity (ball convexity) in $\mathbb R^n$, reducing the Carath\'eodory number to $n$ in several cases; and show that the Carath\'eodory number cannot be smaller than $n$ for an arbitrary gauge body $K$. We also give an improved topological criterion for one convex body to be a Minkowski summand of another.
\end{abstract}

\maketitle

\section{Introduction}

In the works \cite{pol1996,balashov2000} (recent references in English are \cite{holmsenkar2017,jahn2017}) a strengthening of the notion of convexity was studied. The first natural example is to call a convex body $A\subset\mathbb R^n$ \emph{$R$-strongly convex} for a positive real $R$, if $A$ is an intersection of balls of radius $R$. This notion seems to have been rediscovered many times (see the references in the cited works), and in \cite{pol1996,balashov2000} it was generalized to the following: For a fixed convex body $K\subset\mathbb R^n$, another convex body $A$ is \emph{$K$-strongly convex} if it is an intersection of translates of $K$.

The motivations to study the strong convexity may be different, the papers \cite{pol1996,balashov2000} were mostly motivated by certain optimization properties of strongly convex bodies or strongly convex functions, which we do not define here. But it turned out that this notion is mathematically interesting on its own. For example, $K$-strong convexity inherits good properties from ordinary convexity and $R$-strong convexity (corresponding to the case when $K$ is a Euclidean ball) under a certain ``generating set'' assumption on $K$. Note also that what we call here ``strong convexity'' was also studied in \cite{langi2013,jahn2017} under the name ``ball convexity''.

Let us give precise definitions and recall some notations from the cited works. We restrict ourselves to the case of a finite dimensional $\mathbb R^n$, this is the situation when one may expect Carath\'eodory-type theorems that we are going to discuss here.

\begin{definition}
\label{definition:genset}
A convex body $K\subset\mathbb R^n$ is a \emph{generating set} if any non-empty intersection of its translates
$$
K\gdiv T := \bigcap_{t\in T} (K - t)
$$
is a Minkowski summand of $K$, that is, $(K\gdiv T) + T' = K$ for some convex compactum $T'$.
\end{definition}

The simplest case of a generating set is a Euclidean ball in $\mathbb R^n$. In \cite{kar2001gen} it was shown that it is sufficient to test the generating set property for all two-point sets $T$; in \cite{ivanov2007} a nontrivial reformulation of this criterion was established and used to show that sufficiently small, in $C^2$ metric, deformations of Euclidean balls are generating sets. We will not use those nontrivial observations here, but an interested reader may consult the provided references. We only use the relatively simple facts (see \cite{balashov2000}) that in dimensions at most $2$ all convex bodies are generating sets and any Cartesian product of generating sets is a generating set.

\begin{definition}
\label{definition:strongconv}
Let $K\subset \mathbb R^n$ be a convex body. A set $C\subset \mathbb R^n$ is called {\em $K$-strongly convex} if it is an intersection of translates of $K$, that is 
$$
C = K\gdiv T
$$
in the above notation. 
\end{definition}

\begin{definition}
\label{definition:strong-convex-hull}
The minimal $K$-strongly convex set containing a given set $X$ is called its \emph{$K$-strongly convex hull}. If $X$ is contained in a translate of $K$ then it can be expressed by the following formula
$$
\conv_K X = K\gdiv (K\gdiv X).
$$
In case $X$ is not contained in any translate of $K$, we consider the $K$-strongly convex hull of $X$ undefined.
\end{definition}

In \cite{kar2001car} (as explained in \cite{holmsenkar2017} in English) it was shown that if $K$ is a generating set, then a version of the Carath\'eodory theorem (\cite{car1911}, see also the survey \cite{eckhoff1993}) for strong convexity holds, that is, for finite point sets $X\subset\mathbb R^n$ contained in a translate of $K$ we have
\[
\conv_K (X) = \bigcup_{Y\subseteq X,\ |Y|\le n+1} \conv_K (Y).
\]
In other words, under the generating set assumption we have the Carath\'eodory theorem for strong convexity with the same constant $n+1$ as for the usual convexity. More generally, without any assumption on $K$, we make the following definition:

\begin{definition}
We call $m$ the \emph{Carath\'eodory number} for $K$-strong convexity if this is the minimal $m$ such that  
\[
\conv_K (X) = \bigcup_{Y\subseteq X,\ |Y|\le m} \conv_K (Y)
\]
for any finite point set $X$ contained in a translate of $K$. 
\end{definition}

We sometimes speak about the Carath\'eodory number of a particular situation, where $K$, $X$, and a point $x$ are given and we want to conclude that
\[
x\in \bigcup_{Y\subseteq X,\ |Y|\le m} \conv_K (Y)\Rightarrow x\in \conv_K X,
\]
with the smallest possible $m$.

We show that the Carath\'eodory number can be reduced to $n$ in some particular cases. Assuming the generating set property of $K$, the Carath\'eodory number can be reduced to $n$ in the situation when the point $x$ lies outside $\conv X$ (the ordinary convex hull). This may be practically useful because the ordinary convex hull is always contained in the strong convex hull and the problem of determining the $K$-strongly convex hull is only interesting outside the ordinary convex hull. The precise statement of this results is:

\begin{theorem}
\label{theorem:caratheory-outside}
For a generating set $K\subset\mathbb R^n$ and $X\subset\mathbb R^n$, which has a translate in $K$, we have
\[
\conv_K (X)\setminus \conv X \subseteq \bigcup_{Y\subseteq X,\ |Y|\le n} \conv_K (Y).
\]
\end{theorem}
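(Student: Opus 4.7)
The plan is to invoke the standard Carath\'eodory theorem for $K$-strong convexity from \cite{kar2001car}, giving a subset $Y_0 \subseteq X$ with $|Y_0| \le n+1$ and $p \in \conv_K(Y_0)$. If $|Y_0| \le n$ we are done, so assume $|Y_0| = n+1$. Since $p \notin \conv X \supseteq \conv Y_0$, the point $p$ lies strictly outside the simplex $\conv Y_0$. Let $y^*$ be the vertex of $\conv Y_0$ opposite to the facet closest to $p$, so that $p$ and $y^*$ lie strictly on opposite sides of the hyperplane $H$ spanned by $\conv(Y_0 \setminus \{y^*\})$, and let $\ell$ denote the unit normal of $H$ pointing from $y^*$ toward $p$. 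The claim is that $p \in \conv_K(Y_0 \setminus \{y^*\})$, which gives the desired $Y := Y_0 \setminus \{y^*\}$ of size $n$.

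To prove this, suppose for contradiction that some translate $K + w$ of $K$ contains $Y_0 \setminus \{y^*\}$ but misses $p$. Then $y^* \notin K + w$ as well, for otherwise $Y_0 \subseteq K + w$ combined with $p \in \conv_K(Y_0)$ would force $p \in K + w$. Pick a reference translate $K + w'$ with $Y_0 \subseteq K + w'$, which exists since, by hypothesis, some translate of $X \supseteq Y_0$ fits in $K$. Consider the linear family $w_t := (1-t)w + tw'$. Convexity of each set $y - K$ for $y \in Y_0 \setminus \{y^*\}$ keeps $Y_0 \setminus \{y^*\} \subseteq K + w_t$ throughout $t \in [0,1]$. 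By convexity, the sets $\{t : y^* \in K + w_t\}$ and $\{t : p \in K + w_t\}$ are subintervals of $[0,1]$ missing $t = 0$ and containing $t = 1$; let $t^*$ and $t_p$ denote their first-entry times in $(0, 1]$. If $t^* \le t_p$, then at $t = t^*$ we have $Y_0 \subseteq K + w_{t^*}$ while $p \notin K + w_{t^*}$, contradicting $p \in \conv_K(Y_0)$.

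The main obstacle is establishing the monotonicity $t^* \le t_p$. The intuition is that, because $y^*$ and $p$ lie on opposite sides of $H$ and the deformed translate already contains the facet lying in $H$, moving $w_t$ along the segment toward $w'$ extends $K + w_t$ into $y^*$'s side of $H$ before reaching $p$'s side. Making this rigorous likely requires a support-function argument in direction $\ell$, combined with the generating set property of $K$ to keep the deformed translates well-behaved. An alternative route is to reformulate the claim dually via the equivalence $p \in \conv_K(Y) \iff \bigcap_{y \in Y}(y - K) \subseteq p - K$ and show directly that $\bigcap_{y \in Y_0 \setminus \{y^*\}}(y - K) \subseteq p - K$ by exploiting the $\ell$-extremality of $y^*$ together with the Minkowski-summand decomposition of $K$ that the generating set hypothesis provides, so as to absorb the single ``missing'' constraint coming from $y^*$ into the other $n$ constraints.
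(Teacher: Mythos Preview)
Your proposal has a genuine gap that you yourself flag: the monotonicity $t^*\le t_p$ is asserted but never proved, and the intuition you offer (``the deformed translate already contains the facet lying in $H$, so it extends into $y^*$'s side before $p$'s side'') does not survive scrutiny. The direction $w'-w$ of the interpolation is completely uncontrolled; nothing prevents the moving translate $K+w_t$ from swallowing $p$ before $y^*$. A support-function argument in the single direction $\ell$ cannot repair this, because containing $p$ or $y^*$ is governed by the support function of $K$ in \emph{all} directions, not just $\ell$. More tellingly, your argument never actually invokes the generating set hypothesis on $K$ (you only say it is ``likely required''), yet without that hypothesis the strong Carath\'eodory number can be infinite, so any valid proof must use it in an essential way. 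A further technical issue: you tacitly assume $Y_0$ is an affinely independent $(n+1)$-tuple so that ``the facet opposite $y^*$'' makes sense, but the $n{+}1$ points furnished by the known Carath\'eodory theorem need not be in general position.

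The paper's argument is entirely different. It reformulates the assertion as a Helly-type statement (if every $n$-subset of $X$ can be covered by a translate of $K$ missing $p$, then so can $X$) and proves it by a topological Helly theorem applied to the family $G_x=(K-x)\cap\partial K_\epsilon$ on the $(n-1)$-sphere $\partial K_\epsilon$. The hypothesis $p\notin\conv X$ is used to show the $G_x$ do not cover the sphere (this is exactly what drops the Helly number from $n+1$ to $n$), and the generating set property is used, via the acyclicity lemma from \cite{holmsenkar2017}, to guarantee that all finite intersections of the $G_x$ are acyclic. Your alternative ``dual'' route at the end is heading in this direction, but as written it is only a pointer, not an argument.
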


We also show that the Carath\'eodory number is $n$ (this time independently of the point $x$) for specific gauge bodies $K$.

\begin{theorem}
\label{theorem:product-gen}
If $L\subset\mathbb R^\ell$ and $M\subset \mathbb R^m$ are generating convex bodies and $K=L\times M\subset\mathbb R^n$, for $n=\ell+m$ with $\ell,m\ge 1$, then the Carath\'eodory number for $K$-strong convexity is precisely $n$. 
\end{theorem}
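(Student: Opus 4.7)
The plan is to exploit the product decomposition $\conv_K X=\conv_L\pi_L(X)\times\conv_M\pi_M(X)$: for $K=L\times M$, an intersection $\bigcap_i(L-a_i)\times(M-b_i)$ decomposes factor-wise, so with $A=\pi_L(X)$ and $B=\pi_M(X)$ the condition $(p,q)\in\conv_K X$ is equivalent to $p\in\conv_L A$ and $q\in\conv_M B$ treated independently. The lower bound ``Carath\'eodory number $\ge n$'' is supplied by the paper's general lower bound (valid for any gauge $K$), so only the upper bound $|Y|\le n$ needs work.

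For the upper bound I would induct on $|X|$; the base $|X|\le n$ is trivial, and the inductive step reduces to producing, whenever $|X|>n$, a \emph{droppable} element, i.e., an $x\in X$ such that $(p,q)\in\conv_K(X\setminus\{x\})$, equivalently $p\in\conv_L\pi_L(X\setminus\{x\})$ and $q\in\conv_M\pi_M(X\setminus\{x\})$. Letting $E_p=\{x\in X:p\notin\conv_L\pi_L(X\setminus\{x\})\}$ and $E_q$ be the analogous essential set for $q$, existence of a droppable point is exactly $E_p\cup E_q\subsetneq X$. The key claim is then $|E_p|\le\ell$ whenever $|X|\ge\ell+2$ (and symmetrically $|E_q|\le m$ when $|X|\ge m+2$): granted this, $|X|\ge n+1$ together with $\ell,m\ge1$ gives $|E_p\cup E_q|\le\ell+m=n<|X|$, and the induction runs.

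The bound $|E_p|\le\ell$ splits into two cases. If $p\notin\conv A$, Theorem \ref{theorem:caratheory-outside} applied to $L$ yields $A_0\subseteq A$ with $|A_0|\le\ell$ and $p\in\conv_L A_0$; taking $X_0\subseteq X$ of size at most $\ell$ projecting onto $A_0$, every $x\notin X_0$ satisfies $\pi_L(X\setminus\{x\})\supseteq A_0$ and therefore lies outside $E_p$, giving $|E_p|\le|X_0|\le\ell$. If $p\in\conv A$, the inclusion $\conv\subseteq\conv_L$ forces $E_p\subseteq E_p^{\mathrm{ord}}$, the analogous essential set for the ordinary convex hull, so it suffices to show $|E_p^{\mathrm{ord}}|\le\ell$. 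Assume for contradiction $|E_p^{\mathrm{ord}}|=\ell+1$; every Carath\'eodory subset $J$ of size $\le\ell+1$ with $p\in\conv\pi_L(J)$ must contain $E_p^{\mathrm{ord}}$ (otherwise the missing point of $E_p^{\mathrm{ord}}$ would be droppable in the ordinary sense), so $J=E_p^{\mathrm{ord}}$ uniquely and the representation $p=\sum_{i\in E_p^{\mathrm{ord}}}\lambda_i a_i$ has all $\lambda_i>0$. Picking $x_0\in X\setminus E_p^{\mathrm{ord}}$ (available since $|X|\ge\ell+2$), the $\ell+2$ points $\{a_i\}_{i\in E_p^{\mathrm{ord}}}\cup\{a_{x_0}\}$ admit a nontrivial affine dependence $\sum c_i a_i+c_0 a_{x_0}=0$, $\sum c_i+c_0=0$; perturbing the representation of $p$ by a signed multiple $t$ of this relation and stopping at the first moment some $\lambda_i+tc_i$ vanishes produces a competing Carath\'eodory subset of size $\le\ell+1$ containing $x_0$ and missing some $i^*\in E_p^{\mathrm{ord}}$, contradicting uniqueness. (The degenerate sub-case $c_0=0$ makes $\{a_i\}_{i\in E_p^{\mathrm{ord}}}$ itself affinely dependent and similarly collapses $E_p^{\mathrm{ord}}$ to a smaller Carath\'eodory subset, again a contradiction.) This affine-perturbation/uniqueness step is the main obstacle; once the lemma is settled the induction closes routinely.
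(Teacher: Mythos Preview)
Your argument is correct. You share with the paper the two indispensable ingredients: the product formula $\conv_K X=\conv_L\pi_L(X)\times\conv_M\pi_M(X)$ and, for the ``outside'' case, Theorem~\ref{theorem:caratheory-outside} applied to each factor. The organizational scheme, however, is genuinely different. The paper first passes to Carath\'eodory subsets $Y,Z\subseteq X$ with $|Y|\le\ell+1$, $|Z|\le m+1$, proves a lemma that any $p\in\conv_K$ of $n+2$ points lies in $\conv_K$ of \emph{two} distinct $(n+1)$-subsets (via a degree argument for the ordinary-hull case), uses this to build maps $f:Y\to Z$, $g:Z\to Y$, and then runs a bipartite edge-count $(\ell+1)(m+1)>\ell+m+2$ to locate a pair that can be simultaneously dropped; the boundary case $\ell=m=1$ must be handled separately. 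Your essential-set bound $|E_p|\le\ell$, $|E_q|\le m$ (once $|X|\ge n+1$) bypasses the bipartite bookkeeping entirely and needs no special case. Your ordinary-convexity lemma $|E_p^{\mathrm{ord}}|\le\ell$ for $|X|\ge\ell+2$ is in fact equivalent, at $|X|=\ell+2$, to the paper's ``two distinct $(\ell+1)$-subsets'' statement, but you prove it by an affine-dependence perturbation rather than by degree. One small point worth making explicit in your write-up: points of $X$ sharing the same $L$-projection are automatically excluded from $E_p^{\mathrm{ord}}$, and the extra point $x_0$ you pick has $\pi_L(x_0)$ distinct from the projections of $E_p^{\mathrm{ord}}$ for the same reason; this keeps the affine-dependence step honest.
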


A good example for this product theorem is a nontrivial product $K = L_1\times \dots \times L_k$ of convex bodies $L_i$ of dimension at most $2$ each. Since all convex bodies in dimensions at most $2$ are generating sets, as well as their products, this theorem is applicable and the Carath\'eodory number of $K$-strong convexity equals $\dim K$ in this case. A more particular case of this is a parallelotope, which is a product of segments up to an affine transformation.

The proof of Theorem \ref{theorem:caratheory-outside} starts from choosing a point in $\conv_K (X)\setminus \conv X $. Suppose we translate the points and sets so that the point in question becomes $0\in \conv_K X\setminus \conv X$, then the theorem can be restated in the following alternative version. This statement has an advantage that it does not use the complicated definitions of $K$-strong convexity and is itself a nice theorem about coverings:

\begin{theorem}[Alternative statement of Theorem \ref{theorem:caratheory-outside}]
\label{theorem:helly-type}
Assume that $K\subset\mathbb R^n$ is a generating set, $X$ is a finite point set, $X$ can be covered by a translate of $K$, $0\not\in\conv X$, and any $\le n$ points of $X$ can be covered by a translate of $K$ not containing the origin $0$. Then $X$ can be covered by a translate of $K$ not containing the origin. 
\end{theorem}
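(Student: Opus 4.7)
I will work in the contrapositive Carath\'eodory form: assume $A_X \subseteq K$ (equivalently, $0 \in \conv_K X$) together with $0 \notin \conv X$, and produce $Y \subseteq X$ with $|Y| \le n$ and $A_Y \subseteq K$. Such a $Y$ directly contradicts the standing hypothesis that every $|Y|\le n$ has $A_Y \not\subseteq K$.

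First I would invoke the $(n+1)$-Carath\'eodory theorem for $K$-strong convexity of \cite{kar2001car} to obtain a witness $Y_0 \subseteq X$ of minimum cardinality with $A_{Y_0} \subseteq K$. If $|Y_0| \le n$ we are done, so the entire task is to rule out $|Y_0| = n+1$, using the hypothesis $0 \notin \conv Y_0$ (inherited from $0 \notin \conv X$).

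In this critical case, minimality of $Y_0 = \{y_1,\dots,y_{n+1}\}$ supplies, for each $i$, a point $t_i \in A_{Y_0\setminus\{y_i\}}\setminus K$; the assumption $A_{Y_0}\subseteq K$ then forces $t_i + y_i \notin K$ as well (else $t_i\in A_{Y_0}\subseteq K$ contradicts $t_i \notin K$). Separating each $t_i$ from $K$ by a hyperplane with outward normal $w_i$, combined with $t_i + y_j \in K$ for $j\ne i$, yields the sign pattern $\langle w_i, y_j\rangle < 0$ for all $j\ne i$. The hypothesis $0 \notin \conv Y_0$ simultaneously gives a direction $v$ with $\langle v, y_j\rangle > 0$ for every $j$.

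From here I would try to construct $\bar t \in A_{Y_0}\setminus K$ as a convex combination $\bar t = \sum \lambda_i t_i$, directly contradicting $A_{Y_0}\subseteq K$. Writing
$$
\bar t + y_j = \lambda_j(t_j+y_j) + (1-\lambda_j)k_j,
$$
with $k_j\in K$ a convex combination of the points $t_i + y_j\in K$ for $i\ne j$, the main obstacle is to find $\lambda$ on the $n$-simplex so that $\bar t + y_j \in K$ for every $j$ while $\bar t \notin K$ -- that is, to balance the ``bad'' pieces $\lambda_j(t_j+y_j)\notin K$ against the ``good'' pieces $(1-\lambda_j)k_j\in K$ for all $j$ simultaneously. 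I expect the generating-set property of $K$, through the Minkowski identity $K = A_{Y_0} + \conv_K Y_0$ together with the fact (from minimality) that $0 \notin \conv_K(Y_0\setminus\{y_i\})$ for every $i$, to supply the dual separating hyperplanes needed to run a fixed-point/continuity argument on the simplex showing that such $\lambda$ exists precisely when $0 \notin \conv Y_0$, thereby closing the argument by contradiction.
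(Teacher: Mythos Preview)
Your proposal is a plan rather than a proof, and the decisive step is missing. Everything up to the sign pattern $\langle w_i,y_j\rangle<0$ for $j\neq i$ and the existence of $v$ with $\langle v,y_j\rangle>0$ is fine, but the claim that some convex combination $\bar t=\sum_i\lambda_i t_i$ lies in $A_{Y_0}\setminus K$ is exactly the heart of the matter and you have not established it. Notice the tension: requiring $\bar t+y_j\in K$ for every $j$ is precisely $\bar t\in A_{Y_0}$, and under the standing hypothesis $A_{Y_0}\subseteq K$ this already forces $\bar t\in K$. So what you really need is to show, \emph{from minimality, the generating-set identity, and $0\notin\conv Y_0$ alone}, that the $n+1$ points $t_i$ can be chosen and combined so that both conditions hold --- and there is no reason a convex combination of points $t_i\notin K$ stays outside $K$; they may well lie on ``opposite sides'' of $K$. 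Your sign inequalities constrain the $y_j$ relative to the $w_i$, but give no control over the location of $\bar t$ relative to $\partial K$. The phrase ``I expect \dots\ a fixed-point/continuity argument on the simplex'' names a hope, not an argument, and it is not clear what map one would apply a fixed-point theorem to, nor how the Minkowski identity $K=A_{Y_0}+\conv_K Y_0$ would enter.

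The paper's proof takes an entirely different route that avoids this obstruction. It does not try to manufacture a single bad point; instead it passes to the topological sphere $\partial K_\epsilon$ for a slightly inflated copy $K_\epsilon=(1+\epsilon)K$ and considers the family $G_x=(K-x)\cap\partial K_\epsilon$. The generating-set property is used (via \cite[Lemma~2.1]{holmsenkar2017}) to show that every intersection $\bigcap_{x\in Y}G_x$ is acyclic; the hypothesis $0\notin\conv X$ is used to show the $G_x$ do not cover the whole sphere (the point of $\partial K_\epsilon$ maximizing a linear functional positive on $X$ is uncovered). On an $(n-1)$-sphere, a family of acyclic closed sets that does not cover the sphere has topological Helly number $n$, not $n+1$; this is what drops the Carath\'eodory number from $n+1$ to $n$. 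The assumption that every $n$-subset is witnessed then gives a common point of all $G_x$, which is the desired translate. In short, the reduction from $n+1$ to $n$ in the paper comes from the drop in Helly number when one works on a sphere with a missing point, not from any averaging of witnesses $t_i$.
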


It also makes sense to ask how small the Carath\'eodory number for $K$-strong convexity can be for some particular $K$, can it be smaller than $n$, the dimension of $K$. For example, when $K$ is a parallelotope (affine image of a cube) then the Helly number of the system of its translates is $2$ (see \cite[Chapter~III]{boltyanski1997} for different facts about the Helly number of a system of translates). The definition of the Helly number of a system of translates is very close to the statement of Theorem \ref{theorem:helly-type} above, the formula in the beginning of its proof in Section \ref{section:helly-type} relates the Carath\'eodory number for $K$-strong convexity to the Helly number of a family of translates of $K$ minus $K$ itself. Hence we might expect a low Carath\'eodory number for $K$-strong convexity when $K$ is a parallelotope. But in fact this number is never smaller than $n$, since we establish:

\begin{theorem}
\label{theorem:at-least-n}
If $K\subset\mathbb R^n$ is a convex body $($not necessarily a generating set$)$ then the Carath\'eodory number for $K$-strong convexity is at least $n$.
\end{theorem}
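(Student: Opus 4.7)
The plan is to prove the lower bound by exhibiting, for any convex body $K\subseteq\mathbb R^n$, a finite set $X\subset\mathbb R^n$ of cardinality $n$ and a witness point $p$ with $p\in\conv_K X$ but $p\notin\conv_K Y$ for every $Y\subsetneq X$ with $|Y|\le n-1$. Using the dual description $\conv_K Z=\bigcap\{K+t:Z\subseteq K+t\}$ already exploited in the paper, this amounts to arranging that every translate of $K$ covering $X$ also covers $p$, while for each $i$ at least one translate of $K$ covers $X\setminus\{x_i\}$ but misses $p$.

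I will choose $n$ boundary points $x_1,\dots,x_n\in\partial K$ that jointly \emph{pin} $K$, i.e.\ $\bigcap_i(x_i-K)=\{0\}$, so $K$ itself is the only translate containing all the $x_i$. This yields $\conv_K X=K$ immediately, whence $p\in\conv_K X$ for any $p\in K$. I pick $p\in\partial K$ so that, for each $i$, the strictly larger set $\bigcap_{j\ne i}(x_j-K)$ contains an explicit small translation $t_i$ with $p\notin K+t_i$; geometrically, $t_i$ moves $K$ ``away from'' the removed point $x_i$ in a direction preserving the other support inequalities but violating the one at $p$. Two paradigms illustrate this. For the cube $K=[0,1]^n$ take $X=\{e_1,\dots,e_n\}$, $p=(1,\dots,1)$, and $t_i=-e_i$, so that $p-t_i$ has $i$-th coordinate $2\notin[0,1]$. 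For the Euclidean $n$-ball of radius $R$, take $X$ to be the vertices of a regular $(n-1)$-simplex inscribed on an equatorial $(n-2)$-sphere of radius $R$, $p$ a pole of that equator, and $t_i=-\lambda\nu_i$ (with $\nu_i=x_i/R$) for small $\lambda>0$; then $|p-t_i|^2=R^2+\lambda^2>R^2$, so $p\notin K+t_i$.

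Verification has two parts. The pinning $\bigcap_i(x_i-K)=\{0\}$ is checked in each case: for polyhedral $K$ directly from facet inequalities; for smooth strictly convex $K$ one combines the linear support inequalities $\langle\nu_i,t\rangle\ge0$ (arising from $x_i-t\in K$) with the second-order quadratic curvature contribution---for the $n$-ball one has $\langle\nu_i,t\rangle\ge|t|^2/(2R)$, and combined with $\sum_i\nu_i=0$ this forces $|t|=0$. The separation $p\notin K+t_i$ uses an analogous support-function or direct-distance computation, as in the two paradigms above, completing the argument that $p\notin\conv_K(X\setminus\{x_i\})$ for each $i$.

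The central obstacle is establishing pinning with only $n$ boundary points for an \emph{arbitrary} convex body $K$, whose boundary may have any mixture of flat and strictly convex pieces. For polytopes the full-dimensional normal cones at vertices give pinning via linear inequalities; for smooth strictly convex bodies the linear inequalities alone are insufficient (indeed $n$ vectors never positively span $\mathbb R^n$), and the curvature must be invoked through a second-order expansion of the constraints near $t=0$. Handling this uniformly requires a careful choice of the $x_i$ adapted to the local geometry of $\partial K$, via the support function and normal-cone structure of $K$.
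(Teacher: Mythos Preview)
Your proposal identifies an interesting strategy---choose $X\subset\partial K$ so that $K$ is the \emph{unique} translate containing $X$ (``pinning''), whence $\conv_K X=K$---but the proof is incomplete precisely where you say it is. You verify pinning only for the cube and the Euclidean ball, and then write that ``the central obstacle is establishing pinning with only $n$ boundary points for an arbitrary convex body~$K$\ldots\ handling this uniformly requires a careful choice of the $x_i$.'' No such choice is made, and no argument is given that it can always be made. For smooth strictly convex bodies your quadratic trick works once $\sum_i\nu_i=0$, and for simplices one can use $n$ vertices; but the general case (flat facets of various dimensions meeting curved pieces, possibly with no $C^2$ regularity) is not addressed. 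The second half of your plan---producing a single $p\in\partial K$ and, for every $i$, a translate $K+t_i\supseteq X\setminus\{x_i\}$ with $p\notin K+t_i$---is likewise carried out only in the two paradigms.

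The paper's proof sidesteps the pinning issue entirely. It does not place $X$ on $\partial K$ and does not try to make $\conv_K X=K$; instead it takes $X=\{e_1,\dots,e_n\}$ and $p$ the barycenter, so that $p\in\conv X\subseteq\conv_K X$ is automatic. All the work goes into the other direction: using the John ellipsoid, one finds $n$ smooth boundary points of $K$ whose outward normals become $e_1,\dots,e_n$ after an affine change, and (after scaling) each has a large inner tangent ball. A single explicit translate of $K$, tangent to the hyperplane $\{x_1=1/(2n)\}$, then contains the big inscribed ball (hence $e_2,\dots,e_n$) but lies on the wrong side of $p$. The John ellipsoid furnishes in one stroke both the existence of $n$ independent normal directions and the quantitative inner-ball estimate that makes the separating translate work, with no case distinction between polytopal and curved boundary. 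If you want to salvage your route, you would need a general pinning lemma for arbitrary convex bodies; short of that, the paper's barycenter-plus-John-ellipsoid argument is the cleaner path.
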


As for the upper bounds, the examples in \cite[Theorem 8]{langi2013} and \cite[Section 5]{holmsenkar2017} show that in dimensions $n\ge 3$ there is no upper bound for the Carath\'eodory number for strong convexity, while in dimensions $n\le 2$ all convex bodies are generating sets and the Carath\'eodory number is at most $n+1 \le 3$. In Section \ref{section:infinite} we give another example of an unbounded Carath\'eodory number, which in our opinion has a simpler geometric presentation.

Once we have started to consider the case when $K$ is not necessarily a generating set, we mention a fact about the Carath\'eodory number of products. This is a particular case of the main result of \cite{sierksma1975} about the Carath\'eodory number of a product of abstract convexity spaces, which in our setting has the following form:

\begin{theorem}[Sierksma, 1975]
\label{theorem:product-any}
Let $K$ be the Cartesian product $L \times M$ of two convex bodies $L$ and $M$. If $k, \ell, m$ are the strong Carath\'eodory numbers for $K, L, M$ respectively, then $\ell + m - 2 \le k \le \ell + m$.
\end{theorem}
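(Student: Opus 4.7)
The plan is to exploit the fact that for a Cartesian product $K = L \times M$ the operation $\gdiv$ commutes with coordinate projection. Writing $\pi_L$ and $\pi_M$ for the projections to $\mathbb{R}^\ell$ and $\mathbb{R}^m$, the identity $K - (u,v) = (L-u) \times (M-v)$ gives directly, for any finite $X \subset \mathbb{R}^n$ whose translate lies in $K$,
$$
K \gdiv X \;=\; (L \gdiv \pi_L X) \times (M \gdiv \pi_M X),
$$
and iterating once more (the same identity applies to the product set $K \gdiv X$),
$$
\conv_K X \;=\; \conv_L \pi_L X \;\times\; \conv_M \pi_M X.
$$
This single formula drives both inequalities.

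For the upper bound $k \le \ell + m$, given $p = (p_L, p_M) \in \conv_K X$, apply Carath\'eodory in each factor to extract $A \subseteq \pi_L X$ with $|A| \le \ell$ and $B \subseteq \pi_M X$ with $|B| \le m$ witnessing $p_L \in \conv_L A$ and $p_M \in \conv_M B$. Then lift $A$ and $B$ to $X$ by choosing, over each element, one point of $X$ with the prescribed projection, and take the union $Y$. Then $|Y| \le \ell + m$, $\pi_L Y \supseteq A$, $\pi_M Y \supseteq B$, and by monotonicity of $\conv_L$ and $\conv_M$ combined with the product formula, $p \in \conv_K Y$.

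For the lower bound $k \ge \ell + m - 2$, the idea is to pick minimal witnesses to the Carath\'eodory numbers of $L$ and $M$: let $X_L^* = \{x_1, \ldots, x_\ell\}$ and $p_L^*$ be such that $p_L^* \in \conv_L X_L^*$ but $p_L^* \notin \conv_L Y$ for every proper $Y \subsetneq X_L^*$, and symmetrically fix $X_M^* = \{y_1, \ldots, y_m\}$ and $p_M^*$; such minimal witnesses exist by the very definition of the Carath\'eodory numbers $\ell$ and $m$. Now glue them into an L-shape sharing only corner coordinates:
$$
X \;=\; \bigl\{(x_i, y_m) : 1 \le i \le \ell - 1\bigr\} \;\cup\; \bigl\{(x_\ell, y_j) : 1 \le j \le m - 1\bigr\},
$$
a set of $\ell + m - 2$ distinct points with $\pi_L X = X_L^*$ and $\pi_M X = X_M^*$, hence $p := (p_L^*, p_M^*) \in \conv_K X$ by the product formula. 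If $Y \subsetneq X$ omits some $(x_i, y_m)$ with $i < \ell$, then $x_i$ is carried by no other point of $X$, so $\pi_L Y \subsetneq X_L^*$ and extremality forces $p_L^* \notin \conv_L \pi_L Y$; the symmetric argument handles an omitted $(x_\ell, y_j)$ with $j < m$. Hence $p \notin \conv_K Y$ for every proper subset of $X$, giving $k \ge |X| = \ell + m - 2$.

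The product decomposition is essentially a one-line computation, so the genuine content lies in the design of the lower-bound configuration. The subtle point is to arrange that each extremal value of $\pi_L X$ is carried by a \emph{unique} point of $X$, and likewise on the $M$-side; this is precisely why the L-shape beats the naive disjoint-union construction by exactly $2$. Minor care is needed for the degenerate cases $\ell = 1$ or $m = 1$, where the construction can be adjusted by adding back a corner point and the inequality holds with room to spare.
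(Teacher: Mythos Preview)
Your proof is correct and follows essentially the same route as the paper: both use the product formula $\conv_K X = \conv_L \pi_L X \times \conv_M \pi_M X$ for the upper bound and glue extremal witnesses for $L$ and $M$ along a shared ``corner'' for the lower bound. The only difference is that the paper keeps the corner point $(x_\ell,y_m)$ (getting an $(\ell+m-1)$-point set in which any $(\ell+m-3)$-subset fails), whereas you delete it to obtain an $(\ell+m-2)$-point set in which \emph{every} proper subset already fails---a slightly tighter construction yielding the same bound; note also that the case $\ell=1$ or $m=1$ never actually occurs since the strong Carath\'eodory number of a convex body is always at least $2$.
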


Below we give proofs of the theorems stated here. In Section \ref{section:minkowski} we consider a related question of topological criteria for a convex compactum $A$ to be a Minkowski summand of a convex body $B$, giving a version of the results in \cite{goodey1982} and \cite[Section 6]{holmsenkar2017}.

\begin{remark}
The formula $\conv_K X = K\gdiv (K\gdiv X)$ may be interpreted so that when $X$ is not contained in a translate of $K$ then $\conv_K X = \mathbb R^n$, if $K$ is a convex body in $\mathbb R^n$. In Definition~\ref{definition:strong-convex-hull} we preferred to leave $\conv_K X$ undefined in this case. Assuming the definition $\conv_K X = \mathbb R^n$ for $X$ not contained in a translate of $X$, it is easy to investigate the Carath\'eodory number for such sets $X$, it equals the Helly number for the family of translates of $K$. This is a standard exercise which we leave to the reader.
\end{remark}

\subsection*{Acknowledgments}
The authors thank Rolf Schneider and the unknown referees for useful remarks and relevant references.

\section{Proof of Theorem \ref{theorem:caratheory-outside} about the Carath\'eodory number outside the ordinary convex hull}
\label{section:helly-type}

The proof mostly follows the argument in \cite{holmsenkar2017}. Assume the contrary, that the point, which we may assume to be the origin, does not belong to any $\conv_K Y$ for $Y\subseteq X$ with $|Y|\le n$, does not belong to the ordinary convex hull $\conv X$, but belongs to $\conv_K X$. Using the formulas 
\[
0\in K\gdiv (K\gdiv Y) \Leftrightarrow K\gdiv Y\subseteq K \Leftrightarrow \bigcap_{x\in Y} (K - x)\setminus K = \emptyset
\]
we conclude that 
\[
\bigcap_{x\in Y} (K - x)\setminus K \neq \emptyset,
\]
whenever $Y\subset X$ and $|Y|\le n$, while 
\[
\bigcap_{x\in X} (K - x)\setminus K = \emptyset.
\]

Let us introduce the system of sets 
\[
F_x = (K-x)\setminus K,\quad x\in X,
\]
in order to have a contradiction we need to prove that the intersection of the family of such sets is non-empty, provided every intersection of its subfamily of no more than $n$ sets is non-empty. Note that if we were not subtracting $K$ from the translates then the question we study would be just the question of the Helly number of the system of translates of $K$ (well studied in \cite[Chapter~III]{boltyanski1997}, for example).

The proof in \cite{kar2001car,holmsenkar2017} exploits the topological Helly theorem for the family of sets $\{F_x\}$ with Helly number $n+1$, which we now need to reduce to $n$ under the additional assumption $0\not\in\conv X$. In order to prove that the Helly number $n$ is sufficient, we assume, after an appropriate translation of $K$, that $0\in\inte K$, take a slightly inflated $K_\epsilon = (1+\epsilon) K$, and consider $G_x = (K - x)\cap \partial K_\epsilon$ instead of $F_x$ trying to show the following for sufficiently small $\epsilon>0$:

\begin{enumerate}
\item
Any $n$ or less of the sets $\{G_x\}$ have a common point;
\item
The sets $\{G_x\}$ do not cover the whole topological $(n-1)$-sphere $\partial K_\epsilon$;
\item
Any subfamily of the family $\{G_x\}$ has either empty or acyclic intersection.
\end{enumerate}

The \emph{acyclicity} here means vanishing of the reduced \v{C}ech cohomology of the set. The inclusion $u\in G_x$ means that $u \in \partial K_\epsilon$ and $u\in K-x$. We rewrite $0\in \partial K_\epsilon - u$ and $x\in K - u$ and note that the meaning of this is that $K_\epsilon$ translated by $-u$ has origin on its boundary and $K$ translated by $-u$ has $x$ inside.

Claim (1) can be established as follows. $F_{x_1}\cap \dots \cap F_{x_n}\neq \emptyset$ by the assumption of the theorem. This means that $Y = \{x_1,\ldots, x_n\}$ can be covered by a translate of $K-u$ not containing the origin. For sufficiently small $\epsilon$, the inflated $K_\epsilon-u$ does not contain the origin either. The theorem assumes that the whole $X$ can be covered by some $K-v$ and therefore by the inflated $K_\epsilon-v$. If $0\not\in K-v$ then this is what we need to prove; we proceed assuming the contrary, $0\in K-v$. Linearly interpolating between $K-u$ and $K-v$ with $K-(1-t)u - tv$, we find $K-w$ that contains $Y$ such that $0$ is on the boundary of $K_\epsilon-w$. Therefore $w\in \partial K_\epsilon$ and $w\in K - x_i$ for every $x_i\in Y$. In this argument we needed sufficiently small $\epsilon$ for every choice of $Y\subseteq X$ and eventually we are able to choose $\epsilon>0$ so small that it is suitable for all choices of $Y$.

Claim (2) follows from the assumption $0\not\in\conv X$. Taking a linear functional $\lambda$ such that $\lambda(X)>0$ we see that the point of $K_\epsilon$ with largest $\lambda$ cannot be covered by any $K-x$, $x\in X$.

Claim (3) needs the generating set property. First recall the useful fact from \cite{kar2001car} (in Russian) or \cite[Lemma 2.1]{holmsenkar2017} (in English with a simplified proof):
\begin{lemma}
\label{lemma:acyclic}
For two convex compacta $S, T\subset\mathbb R^n$ the set $S\setminus (S+T)$ is either empty or acyclic.
\end{lemma}

Now we explain Claim (3) using this lemma. The generating property of $K$ implies that any non-empty set $K\gdiv Y$ is a Minkowski summand of $K$, that is $K\gdiv Y + T = K$ for a convex compactum $T$. Hence $(K\gdiv Y)\setminus K$ is either empty or acyclic by the lemma. Let us show a similar fact, that any set of the form
\[
(K\gdiv Y) \setminus \inte K_\epsilon
\]
is either empty or acyclic. We note that $K\gdiv Y$ is a Minkowski summand of $(1+\delta)K$, just because for any $\delta>0$
\[
(K\gdiv Y) + T + \delta K = (1+\delta) K.
\]
By Lemma \ref{lemma:acyclic} we know that
\[
(K\gdiv Y) \setminus (1 + \delta) K
\]
is either empty or acyclic. If we make $\delta$ increase to $\epsilon$, we represent the compact set $(K\gdiv Y) \setminus \inte K_\epsilon$ as an intersection of the decreasing sequence of sets $(K\gdiv Y) \setminus (1 + \delta) K$ and use the continuity property of the \v{C}ech cohomology to ensure that the intersection is also empty or acyclic.

It remains to note that the set
\[
G_Y = (K\gdiv Y)\cap \partial K_\epsilon = \bigcap_{x\in Y} G_x
\]
is obtained as a central projection of $(K\gdiv Y)\setminus \inte K_\epsilon$ from a center in $(K\gdiv Y) \cap \inte K_\epsilon$ to the boundary $\partial K_\epsilon$ and any fiber of this projection is a segment (and is acyclic). Now we use another topological from \cite{begle1950,begle1955}:

\begin{lemma}[The Vietoris--Begle theorem]
\label{lemma:leray-map}
Let $f : X\to Y$ be a proper continuous map between metric spaces such that for every $y\in Y$, the fiber $f^{-1}(y)$ is acyclic. Then $f$ induces an isomorphism of the \v{C}ech cohomology of $X$ and $Y$.
\end{lemma}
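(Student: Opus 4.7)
The plan is to prove the Vietoris--Begle theorem via the Leray spectral sequence applied to $f$ and the constant sheaf $\mathbb Z_X$. The strategy is to translate the hypothesis ``every fiber is acyclic'' into the sheaf-theoretic vanishing $R^q f_* \mathbb Z_X = 0$ for $q > 0$, which then forces $f^* : H^*(Y) \to H^*(X)$ to be an isomorphism.

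Concretely, I would set up the higher direct image sheaves $R^q f_* \mathbb Z_X$ on $Y$ as the sheafifications of the presheaves $U \mapsto \check H^q(f^{-1}(U);\mathbb Z)$. The key identity is the proper base change stalk computation
\[
(R^q f_* \mathbb Z_X)_y \;\cong\; \check H^q(f^{-1}(y);\,\mathbb Z),
\]
which follows from properness of $f$ (so that $f^{-1}(y)$ is compact Hausdorff and the saturated open neighborhoods $\{f^{-1}(U):U\ni y\}$ are cofinal among all open neighborhoods of $f^{-1}(y)$) combined with the continuity of \v{C}ech cohomology along decreasing nested intersections of compact Hausdorff spaces. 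The acyclicity hypothesis then gives $(R^0 f_* \mathbb Z_X)_y=\mathbb Z$ and $(R^q f_* \mathbb Z_X)_y=0$ for $q>0$, so $R^0 f_* \mathbb Z_X \cong \mathbb Z_Y$ and $R^q f_* \mathbb Z_X = 0$ for $q > 0$. Feeding this into the Leray spectral sequence
\[
E_2^{p,q} = H^p(Y;\, R^q f_* \mathbb Z_X) \;\Longrightarrow\; H^{p+q}(X;\,\mathbb Z)
\]
annihilates every row with $q>0$; the sequence collapses at $E_2$ and the surviving edge morphism is exactly $f^* : H^p(Y;\mathbb Z) \to H^p(X;\mathbb Z)$. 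Since $X$ and $Y$ are metric, hence paracompact Hausdorff, sheaf cohomology and \v{C}ech cohomology coincide on both sides, giving the advertised isomorphism of \v{C}ech groups.

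The main obstacle is this proper base change step in the topological generality demanded here: no ANR, CW or local contractibility assumptions are available, so one must verify the continuity of \v{C}ech cohomology under the inverse system $\{f^{-1}(U)\}_{U\ni y}$ using only properness of $f$ and Hausdorffness of $Y$. This is precisely the classical material packaged in Godement~\S 5, cited just after the lemma in the paper. A more self-contained alternative is Begle's original direct \v{C}ech-complex argument: acyclic fibers are in particular nonempty, so $f$ is surjective, and therefore for every open cover $\mathcal U$ of $Y$ the nerves of $\mathcal U$ and of $f^{-1}\mathcal U$ coincide, yielding $\check H^*(X;\, f^{-1}\mathcal U) \cong \check H^*(Y;\mathcal U)$; a tube-lemma argument combined with fiber acyclicity then shows that the pulled-back covers $\{f^{-1}\mathcal U\}$ are cofinal enough among open covers of $X$ that passing to $\varinjlim$ in the \v{C}ech definition recovers $\check H^*(X) \cong \check H^*(Y)$.
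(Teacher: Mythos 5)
The paper does not actually prove this lemma: it is stated as a classical result and attributed to Begle's papers \cite{begle1950,begle1955}, with Godement's book recommended elsewhere for the underlying sheaf-theoretic machinery, so there is no in-paper argument to compare yours against. Your Leray spectral sequence sketch is the standard modern proof and its architecture is sound. The one genuinely delicate step is the stalk identification $(R^qf_*\mathbb Z_X)_y\cong\check H^q(f^{-1}(y);\mathbb Z)$, and you have correctly isolated the two ingredients that make it work without any local contractibility: properness makes $f$ a closed map (metric spaces are compactly generated), so the saturated open sets $f^{-1}(U)$, $U\ni y$, are cofinal among neighborhoods of the fiber; and a compact subset of a paracompact Hausdorff space is taut for \v{C}ech/Alexander--Spanier cohomology, so $\varinjlim_{U\ni y}\check H^q(f^{-1}(U))\cong\check H^q(f^{-1}(y))$. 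Combined with the coincidence of sheaf and \v{C}ech cohomology on paracompact spaces and the identification of the edge morphism with $f^*$, this closes the argument; note also that acyclicity of fibers forces surjectivity of $f$, which you need for $R^0f_*\mathbb Z_X\cong\mathbb Z_Y$. One caution about your ``more self-contained alternative'': the pulled-back covers $f^{-1}\mathcal U$ are \emph{not} cofinal among open covers of $X$ in any literal sense, and Begle's original argument is a considerably more involved induction on partial realizations of covers using fiber acyclicity; that route is much more work than your last sentence suggests, so the spectral-sequence proof is the one to keep.
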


Thus we conclude that $G_Y = \bigcap_{x\in Y} G_x$ is either empty or acyclic. We want to apply the topological Helly theorem: \emph{The family of compacta $\{G_x\}$ has non-empty intersection provided every union of its subfamily has vanishing cohomology in dimension $n-1$ and above and every intersection of its subfamily is either empty or acyclic.}

The claims (1), (2), (3) about the family $\{G_x\}$ verify the hypothesis of the topological Helly theorem. For (1) and (3) it is clear, while Claim (2) means that the union of this family or any its subfamily is contained in a topological sphere without one point, and hence has vanishing cohomology in dimension $n-1$ and higher.

The topological Helly theorem now implies that all the $G_x$ intersect in a point $u\in\partial K_\epsilon$. It means that the translate $K-u$ contains the whole $X$ and $K_\epsilon-u$ has $0$ on its boundary at the same time. Hence $K-u\subseteq \inte (K_\epsilon - u)$ does not contain the origin, verifying that the origin is not in $\conv_K X$. This contradiction completes the proof of the theorem.

\begin{remark}
We recommend to an interested reader the textbook \cite[\S 5]{godement1958} for further references on \v{C}ech cohomology (in which we mean the acyclicity), the covering resolution for the sheaf cohomology and other classical ideas that imply the topological Helly's theorem in the formulation that we use. A relevant recent paper about different versions of the topological Helly theorem is \cite{montejano2014}. 
\end{remark}

\begin{remark}
As it was noticed by one of the referees, our use of the topological Helly theorem may be easier to understand if we say that Claim (2) for the family $\{G_x\}$ means that this family misses a point of the topological sphere $\partial K_\epsilon$. Hence after a homeomorphism of this punctured sphere to $\mathbb R^{n-1}$ we obtain a more familiar situation of the topological Helly theorem in a Euclidean space.
\end{remark}

\begin{remark}
The case of $R$-strong convexity in this theorem, when $K$ is a Euclidean ball, may be done with the application of the ordinary Helly theorem for convex spherical caps not requiring any topology: A family of convex spherical caps not covering the whole sphere $\mathbb S^{n-1}$ has Helly number $n$. See also \cite[Theorem 5.7]{bezdek2007} for a similar argument.
\end{remark}

\section{Proof of Theorem \ref{theorem:product-gen} about the Carath\'eodory number when the gauge set is a product of generating sets}
\label{section:by-segment}

This theorem can be established with the techniques of the paper \cite{sierksma1975}, but we present a self-contained argument for reader's convenience. We start with a lemma that establishes a useful property of strong convexity.

\begin{lemma}
\label{lemma:exchange}
Let $K\subset\mathbb R^n$ be a generating set and let $X$ be a set of $n+2$ points. Then any point $p\in \conv_K X$ is contained in $\conv_K X'$ and $\conv_K X''$ for at least two distinct $X',X''\subset X$ of $n+1$ points each.
\end{lemma}

\begin{proof}
First note that the same thing is true for ordinary convexity: Any point $p\in \conv X$ is in $\conv X'$ and $\conv X''$ for at least two distinct $X',X''\subset X$ of at most $n+1$ points each. To show this we consider the linear map $f : \partial\Delta^{n+1}\to \mathbb R^n$, mapping the vertices of the simplex $\Delta^{n+1}$ to the points of $X$. This map has degree zero on the boundary of the simplex and assuming the contrary, $p$ lying in the image of only one facet of the simplex, we see that the degree cannot be zero, a contradiction.

Now we proceed to the case of $p\in \conv_K X$. If $p$ is in the ordinary convex hull then it is in the ordinary convex hulls of at least two distinct proper subsets of $X$, which in turn lie in the $K$-strongly convex hulls, and we are done. Otherwise by Theorem \ref{theorem:caratheory-outside} $p$ is in a convex hull of some $X'''\subset X$ of at most $n$ points, and we can choose two distinct $X',X''\supset X'''$ of $n+1$ points each.
\end{proof}

Now we prove the theorem. Let $K=L\times M$, take $X\subset\mathbb R^n$, and $p\in \conv_K X$. Denote by $P$ and $Q$ the projections of $\mathbb R^n$ onto the complementary $\mathbb R^\ell$ and $\mathbb R^m$ respectively, corresponding to the decomposition $L\times M$. Evidently, 
\[
\conv_K X = \conv_L P(X)\times \conv_M Q(X).
\]
By the Carath\'eodory theorem for $L$-strong convexity we find a subset $Y\subseteq X$ of at most $\ell+1$ points so that $P(p)\in \conv_L P(Y)$; by the Carath\'eodory theorem for $M$-strong convexity we find a subset $Z\subseteq X$ of at most $m+1$ points so that $Q(p)\in \conv_M P(Z)$.

In total, we have at most $\ell + m + 2 = n+ 2$ points in $Y\cup Z$ and $p\in \conv_K(Y\cup Z)$. 


The case $|Y\cup Z|\le n$ is trivial.

Consider the case $|Y\cup Z| = n+1$. In this case either $Y$ has $\ell+1$ points or $Z$ has $m+1$ points, since otherwise $|Y\cup Z| \le |Y| + |Z| \le \ell+m=n$. Without loss of generality, let $Z$ have $m+1$ points.

If we take any point $y \in Y$ then by Lemma \ref{lemma:exchange} applied to $M$, $Q(p)$, and $Q(Z\cup\{y\})$, there is a corresponding point $z \in Z$ such that 
\begin{equation}
\label{equation:drop}
Q(p) \in \conv_M Q\left( Z \cup \{y\} \setminus \{z\}\right).
\end{equation}
It follows that $p$ is in the $K$-strongly convex hull of $(Z\cup\{y\}\setminus\{z\})\cup Y$, which has at most $n$ points.

It remains to consider the case $|Y\cup Z| = n+2$, this means that $Y$ and $Z$ are disjoint and have $\ell+1$ and $m+1$ points respectively. What we want to do is to drop one point $y$ from $Y$ and one point $z$ from $Z$ so that the remaining $n$ points of 
\[
X' = (Y\setminus \{y\})\cup (Z\setminus\{z\})
\]
still possess the property that $P(p)\in \conv_L P(X')$ and $Q(p)\in \conv_M Q(X')$, implying $p\in \conv_K(X')$

Choose a map $f : Y \to Z$ taking any $y$ to a $z$ as in \eqref{equation:drop}. In the other direction, as we now have $|Y|=m+1$, consider the corresponding map $g$ (arising from the projection $P$ and the $L$-strong convexity) from $Z$ to $Y$. 

It is sufficient to find $y^* \in Y$ and $z^* \in Z$ such that $f(y^*) \neq z^*$ and $g(z^*) \neq y^*$, dropping $f(y^*)$ and $g(z^*)$ we will keep $P(p)$ contained in $\conv_L P(X')$ and $Q(p)$ contained in $\conv_M Q(X')$ from the choice of $f$ and $g$. We just note that $f$ and $g$ make edges of a bipartite graph on $Y\sqcup Z$, and when
\begin{equation}
\label{equation:big}
(\ell + 1)(m + 1) > \ell + m + 2,
\end{equation}
we can take $y^*\in Y$ and $z^*\in Z$ not connected by an edge in any direction. 

Inequality \eqref{equation:big} only fails for $\ell = m = 1$. In this case $K$ is a square in the plane and we can argue differently. Put $p$ to the origin and then it remains to find two points in $X$ that lie in opposite quadrants relative to $p$ to have $p$ in their $K$-strongly convex hull. But such a situation means that a vertical or a horizontal line separates $p$ from $X$, which evidently contradicts the inclusion $p\in \conv_K(X)$.


We have proved that the Carath\'eodory number is at most $n$, it is actually precisely $n$ because of Theorem~ \ref{theorem:at-least-n}.

\section{Proof of Theorem \ref{theorem:at-least-n}, that the Carath\'eodory number for strong convexity is not less than the dimension}

We assume $n\ge 2$, because for $n=1$ all convex bodies $K\subset \mathbb R$ are segments and the Carath\'eodory number is always $2$.

Consider the John ellipsoid of $K$, that is the ellipsoid of maximal volume contained in $K$. If we make an affine transformation to make the ellipsoid a unit ball $B$ centered at the origin then (see \cite{ball1997}) the set of tangency points $B\cap \partial K$ linearly spans the whole $\mathbb R^n$. Hence it contains certain points $v_1,\ldots,v_n$ that produce a basis of $\mathbb R^n$, then the normals to both $B$ and $K$ at those points are the same vectors $v_1,\ldots, v_n$.

Now make a linear transformation so that the points of $\partial K$ become some $p_1,\ldots, p_n$, while the outer normals to $K$ at those points become an orthonormal basis $e_1,\ldots, e_n$. Since there is an ellipsoid inner tangent to $K$ at every $p_i$, we conclude that there is a ball of some radius inner tangent to $K$ at every point $p_i$. Let us scale $K$ until those balls have radius greater or equal to $R$, satisfying the inequalities
\begin{equation}
\label{equation:big-radius}
R^2 \ge 1 + \left(R - \frac{1}{2n}\right)^2\Leftrightarrow \frac{R}{n} \ge 1  + \frac{1}{4n^2} \Leftrightarrow R \ge n + \frac{1}{4n}.
\end{equation}

Now let $X$ be just the set $\{e_1,\ldots,e_n\}$ and 
\[
p = \frac{e_1 + \dots + e_n}{n}.
\]
Evidently $p\in \conv X \subseteq \conv_K X$ and in order to prove the theorem it remains to show that $p$ is not contained in the $K$-strongly convex hull of a proper subset $Y\subset X$. Without loss of generality assume $Y=\{e_2,\ldots,e_n\}$.

The hyperplane $H = \{x : x\cdot e_1 = 1/(2n)\}$ separates $p$ from $Y$. Translate $K$ by the vector 
\[
\frac{1}{2n} e_1 - p_1,
\]
after this translation $H$ is a tangent plane to $K$ at $e_1/(2n)$ and therefore the translated $K$ does not contain $p$. Also, after the translation $K$ contains the ball $B_1$ of radius $R$ centered at 
\[
\left(\frac{1}{2n} - R\right) e_1.
\]
Assumption \eqref{equation:big-radius} implies that $B_1$ also contains $e_2,\ldots, e_n$. Hence the translate of $K$ we consider contains $Y$ and does not contain $p$, proving that $p\not\in \conv_K Y$.

\section{Yet another example with an infinite Carath\'eodory number}
\label{section:infinite}

Let us give an example of an infinite Carath\'eodory number, which looks simpler and more geometric compared to the examples in \cite[Theorem 8]{langi2013} and \cite[Section 5]{holmsenkar2017}. 

Let $K$ be a cone in $\mathbb R^3$ with a unit disk base in the plane $A$ and the apex above $A$. Let $X$ be the vertices of a regular polygon inscribed in a circle of radius $1/2$ in the plane $A$. Assume $|X|\ge 4$, since otherwise some of the pictures get too degenerate.

We will show that there exists a point $p\in\conv_K X$, which is not in any $\conv_K X'$ for any proper subset $X'\subset X$. 

Put $H = \left( \conv_K X\right) \cap A.$ This planar set $H$ has boundary consisting of convex arcs between consecutive points of $X$ of radius equal one. This is so because the intersection of a translate $K+t$ with the plane $A$ is a disk of radius at most $1$ and it must contain thus described curvilinear regular polygon $H$.

Let us take a point $p$ to be in $\mathbb R^3$ above the center of $X$, that is on the line through the center of $X$ perpendicular to $A$, such that the maximum of the angle $\angle (py, A)$ over $y\in \partial H$ equals the angle $\alpha$ between the base of $K$ and any generatrix of $K$. This maximum is attained in the midpoint of every curved segment of the boundary $\partial H$. Our choice of $p$ guarantees that any translate of $K$ containing $X$ (whose intersection with $A$ is a disk of radius at most $1$) must contain the point $p$ as well. This is so, since otherwise a plane $B$ with $\angle (A,B)=\alpha$ would separate $H$ from $p$ and touch $H$, which we exclude by the choice of $p$.

If we drop a point to have $X'=X\setminus \{x\}$ instead of $X$, then we have $H'$ instead of $H$, with one arc of $\partial H'$ longer than others and having the removed point $x$ strictly outside, see Figure~\ref{figure:cone}. Consider a translate of $K$ with base in $A$ such that the boundary of $K\cap A$ passes through this new arc (its base is the big circle in the picture). We observe that $p$ is not in this translate of $K$. This is because the new arc is closer to the center of $X$ compared to $\partial H$ and the maximum of the angle $\angle (py, A)$ over $y\in \partial H'$ is now attained in the center of this new arc, and is greater than $\alpha$.

Hence $p$ is not in $\conv_K X'$ for any proper subset $X'\subset X$ and the Carath\'eodory number for $K$-strong convexity is at least $|X|$, which can be arbitrarily high in our construction with the fixed cone $K$.

\begin{figure}
\includegraphics[scale=1.0]{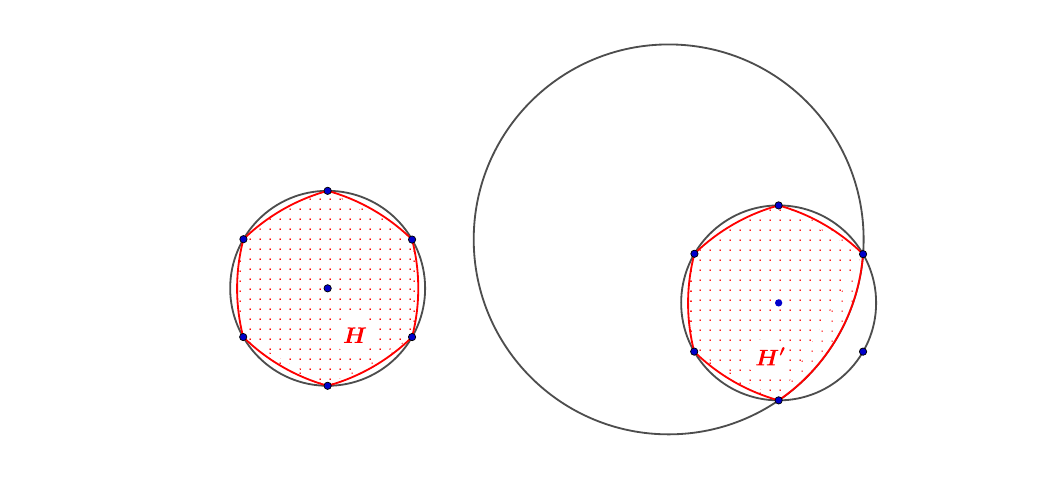}
\caption{The point $p$ is above the blue center of the circle.}
\label{figure:cone}
\end{figure}

\section{Another criterion for the Minkowski summand}
\label{section:minkowski}

Developing the property of acyclicity of $A\setminus (A + B)$ used in \cite[Lemma 2.1]{holmsenkar2017} we referenced above, in \cite{holmsenkar2017} it was shown that given a convex body $A\subset \mathbb R^n$ and a convex open bounded set in $B\subset \mathbb R^n$ such that for any vector $t\in\mathbb R^n$, the set $(A+t)\setminus B$ is either empty or acyclic, the set $A$ is a Minkowski summand of $B$. The latter means there exists an open convex $C$ such that $B = A + C$. 

In \cite{goodey1982} a similar criterion was established, essentially the opposite of what we used in Section~\ref{section:helly-type}: If $(A+t)\cap \partial B$ is always acyclic for every translation $t$ and fixed convex bodies $A,B\subset\mathbb R^n$, then $A$ is a Minkowski summand of $B$. 

Here we establish another criterion for a Minkowski summand, using essentially the same technique as in \cite{goodey1982,holmsenkar2017}. Again, the acyclicity is assumed in terms of the \v{C}ech cohomology.

\begin{theorem}
Let $A$ be a convex compactum in $\mathbb R^n$ and $B$ be a convex body in $\mathbb R^n$. Assume that a translate $A+t_0$ is contained in the interior of $B$, and for any vector $t\in\mathbb R^n$, if $(A+t)$ is contained in $B$ then the set $(A+t)\cap \partial B$ is either empty or acyclic. Then $A$ is a Minkowski summand of $B$.
\end{theorem}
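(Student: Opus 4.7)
Define $C = \{t \in \mathbb R^n : A + t \subseteq B\}$, a closed convex set containing $t_0$ in its interior (since $A + t_0 \subseteq \inte B$ leaves room for small perturbations), hence an $n$-dimensional convex body. The inclusion $A + C \subseteq B$ is immediate; the content of the theorem is the reverse inclusion $B \subseteq A + C$.

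The plan is to argue topologically via the incidence space
\[
\Sigma = \{(t, q) \in \partial C \times \partial B : q \in A + t\}
\]
and its projections. The first projection $\pi_1 \colon \Sigma \to \partial C$ has fiber $(A+t) \cap \partial B$ over $t$; this fiber is nonempty because $t \in \partial C$ means $A + t$ cannot be moved in some direction without leaving $B$, forcing contact with $\partial B$, and it is acyclic by the hypothesis. Since $\partial C \cong S^{n-1}$, the Vietoris--Begle theorem for \v{C}ech cohomology implies that $\pi_1$ is a cohomology equivalence, so $\Sigma$ has the \v{C}ech cohomology of $S^{n-1}$.

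It then suffices to show the second projection $\pi_2 \colon \Sigma \to \partial B$ is surjective, for then every point of $\partial B$ is contained in some translate $A + t$ with $t \in \partial C \subseteq C$, giving $\partial B \subseteq A + C$; convexity together with $A + t_0 \subseteq \inte B \subseteq A + C$ would then finish $B \subseteq A + C$. To prove $\pi_2$ surjective I would compute the induced map on top-dimensional \v{C}ech cohomology and argue its degree is $\pm 1$. In the generic smooth strictly convex case each fiber of $\pi_1$ is a single tangency point, $\Sigma$ is a section, and $\pi_2$ is the Gauss-map identification between the outward normal parameterization of $\partial C$ and $\partial B$, a homeomorphism of degree one. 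The general case should follow by smoothing $A$ and $B$ and taking a limit, using continuity of \v{C}ech cohomology under nested intersections in the spirit of Section~\ref{section:helly-type}.

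The main obstacle I foresee is controlling the degree of $\pi_2$ in the general non-smooth setting, where $\Sigma$ need not be a manifold and the fibers of $\pi_1$ can be positive-dimensional acyclic sets; the naive Gauss-map picture then breaks down and one has to work purely at the level of induced \v{C}ech cohomology maps. A possibly cleaner alternative, more in keeping with the style of this paper, would be to bypass the degree calculation altogether and adapt the cohomological Helly-type argument from Section~\ref{section:helly-type} directly to the family $\{(A + t) \cap \partial B\}_{t \in \partial C}$ of contact sets on $\partial B$, combining the acyclicity of these sets with the sphere-like topology of $\partial C$ to force their union to cover $\partial B$.
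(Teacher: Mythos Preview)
Your setup is exactly the paper's: the same incidence space $\Sigma$, the same application of Vietoris--Begle to $\pi_1$ giving $\Sigma$ the \v{C}ech cohomology of $S^{n-1}$, and the same reduction to surjectivity of $\pi_2$. Where you diverge is in how to establish that surjectivity, and here you miss the key observation that makes the paper's argument clean and complete.

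The fibers of $\pi_2$ are themselves convex. Indeed, for $q\in\partial B$ the fiber is $\{t\in\partial C: q\in A+t\}$. But if $t\in C$ and $q\in A+t$, then $A+t\subseteq B$ yet $A+t$ meets $\partial B$, so $A+t\not\subseteq\inte B$ and hence $t\in\partial C$; thus the fiber equals $C\cap(q-A)$, a convex compactum. Applying Vietoris--Begle a second time, now to $\pi_2$, shows that the image $S=\pi_2(\Sigma)\subseteq\partial B$ has the \v{C}ech cohomology of $S^{n-1}$. If $S$ were a proper closed subset of the sphere $\partial B$, its $(n-1)$-dimensional \v{C}ech cohomology would vanish (it is a limit of cohomologies of open $(n-1)$-manifold neighborhoods), a contradiction. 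Hence $S=\partial B$ and $A+C\supseteq\partial B$, which forces $A+C=B$.

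So no degree computation, no smoothing, and no Helly-type covering argument is needed: the symmetry of the situation (acyclic fibers in \emph{both} directions) lets Vietoris--Begle do all the work. Your proposal is not wrong in spirit, but as written it leaves the genuine difficulty unresolved, whereas the missing convexity observation dissolves it entirely.
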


\begin{proof}
Put $C = B\gdiv A$, and note that $C$ is a convex body since it contains $t_0$ in its interior, we will show that $A+C = B$. For any $t\in \partial C$ we have $A + t \subseteq B$, hence $(A+t)\cap \partial B$ is an acyclic compactum, it cannot be empty since otherwise $A+t\subseteq \inte B$ and from the positive distance between $A+t$ and $\mathbb R^n\setminus \inte B$ there would exist a neighborhood $U(t)$ such that $A+U(t)\subseteq B$, which would imply $t\in \inte C$, not in $\partial C$.

Now consider the set
\[
Z = \left\{ (x,t) : x\in \partial B,\ t\in \partial C,\ x \in A + t \right\},
\]
it is compact and its projection to $\partial C$ has compact acyclic fibers $(A+t)\cap \partial B$ as already shown. Hence by the Vietoris--Begle theorem (Lemma \ref{lemma:leray-map} above), $Z$ has the same \v{C}ech cohomology as $\partial C$, which is the $(n-1)$-sphere.

Consider now the projection of $Z$ to the first summand, $\partial B$. A fiber of this projection over $x\in \partial B$ is
\[
\left\{ t \in\partial C: A + t \ni x \right\} = \left\{ t \in C: A + t \ni x \right\} = C \cap (t - A),
\]
which is a, possibly empty, convex compactum. Let $S\subset \partial B$ be the image of this projection, the Vietoris--Begle theorem implies that the \v{C}ech cohomology of $S$ is the same as that of $Z$ and therefore its cohomology is the cohomology of the $(n-1)$-sphere. If $S$ is not the whole $\partial B$ then its $(n-1)$-dimensional cohomology would vanish, since it can be calculated as the limit of the cohomologies of its neighborhoods, open $(n-1)$-manifolds. 

Hence $S$ must be the whole $\partial B$ and then $A+ C \supseteq \partial B$, hence $A+C = B$.
\end{proof}

\bibliography{../Bib/karasev}
\bibliographystyle{abbrv}
\end{document}